\newtheorem{Theorem}{Theorem}[section]
\newtheorem{lemma}[Theorem]{Lemma}
\newtheorem{proposition}[Theorem]{Proposition}
\newtheorem{corollary}[Theorem]{Corollary}
\newtheorem{remark}[Theorem]{Remark}
\newcommand{\tpitchfork}{%
  \vbox{
    \baselineskip\z@skip
    \lineskip-.52ex
    \lineskiplimit\maxdimen
    \m@th
    \ialign{##\crcr\hidewidth\smash{$-$}\hidewidth\crcr$\pitchfork$\crcr}
  }%
}
\title{Uniqueness of Lagrangians in $T^*\mathbb{R}P^2$}
\author{Nikolaos Adaloglou\footnote{ This scientific paper was supported by the Onassis Foundation - Scholarship ID: F ZP 001-1/2019-2020}  \\ \href{mailto:n.nadaloglou@math.leidenuniv.nl}{ n.adaloglou@math.leidenuniv.nl}}
\begin{document}
\maketitle

\begin{abstract}
    We present a new and simpler proof of the fact that any Lagrangian $\mathbb{R}P^2$ in $T^*\mathbb{R}P^2$ is Hamiltonian isotopic to the zero section. Our proof mirrors the one given by Li and Wu for the Hamiltonian uniqueness of Lagrangians in $T^*S^2$, using surgery to turn Lagrangian spheres into symplectic ones. The main novel contribution is a detailed proof of the folklore fact that the complement of a symplectic quadric in $\mathbb{C}P^2$ can be identified with the unit cotangent disc bundle of $\mathbb{R}P^2$.
\medskip


\end{abstract}


\section*{Introduction}
The Nearby Lagrangian Conjecture, due to Arnol'd, states that in a cotangent bundle, any exact Lagrangian is Hamiltonian Isotopic to the zero section. Even for surfaces, this conjecture is generally open, with results obtained only for $T^*S^2$ by Hind in \cite{H}, in $T^*\mathbb{R}P^2$ by Hind-Pinsonnault-Wu in \cite{HPW} and by Dimitroglou Rizell-Goodman-Irvii in $T^*T^2$ in \cite{DGI}.\par

In \cite{LW}, Li-Wu give an alternative proof of Hamiltonian uniqueness in $T^*S^2$. They use only compact $J$-holomorphic curves and avoid the more advanced techniques of Symplectic Field Theory, which were implemented originally by Hind. The goal of this paper is to use the ideas of Li-Wu to give an elementary proof for Hamiltonian uniqueness in $T^*\mathbb{R}P^2$. The proof we present here is completely analogous to the proof of Theorem 6.6 in \cite{LW}. The novel contribution of this paper is a detailed proof of the folklore fact that the (unit) cotangent bundle of $\mathbb{R}P^2$ compactifies to $\mathbb{C}P^2$. \par

The paper is organized as follows: First, we recall the symplectic blow-up and down procedure and how it is used to obtain the main hard results we use, namely Lemma \ref{sphereuniq} and Theorem \ref{laguniqcp2}. They concern the Hamiltonian uniqueness of certain submanifolds in $\mathbb{C}P^2$. In the second section, we explain the compactification which, roughly speaking, allows us to identify the complement of a quadric $Q$ in $\mathbb{C}P^2$ with $T^*\mathbb{R}P^2$ while also identifying the standard Lagrangian $\mathbb{R}P^2$ in $\mathbb{C}P^2$ with the zero-section of $T^*\mathbb{R}P^2$. In the final section, we recall a result of Evans \cite{Emcg} about the symplectic mapping class group of $T^*\mathbb{R}P^2$ and put everything together to finally conclude the proof of Arnol'ds conjecture for $T^*\mathbb{R}P^2$.\par

Let us remark that since the appearance of this paper on the Arxiv, the presented results have been greatly streamlined. Indeed, in her preprint \cite{johbim}, Bimmermann constructs a similar but more natural compactification, where all the symplectic forms are the desired ones and there is no need to appeal to hard pseudoholomorphic techniques. In another direction, Hauber pointed out to us that similar results were independently obtained in \cite{sackel2022certain}.

\subsection*{Acknowledgement}
This paper summarizes the results of the author's Master Thesis which was carried out under the supervision of the late Will Merry. Will's choice of topic and overall guidance deeply shaped my mathematical understanding and are still visible years after my time at ETH. This paper is dedicated to him.

\section{Recollection on the Symplectic Rational Blow-up and down}
The rational blow-up and blow-down are surgeries performed in $4$-manifolds, originally introduced by Fintushel-Stern in \cite{FS}. Roughly speaking, the idea is to look at certain configurations of spheres $C=\cup S_i$ and to consider different fillings of the 3-manifold $Y$ that appears as the boundary of a tubular neighborhood of $C$. For instance, different fillings may come from viewing $Y$ as the link of a smoothable surface singularity. Considering $C$ as the exceptional divisor of the resolution, another filling is obtained by the Milnor fiber (or smoothing) of the singularity. In fact, the usual Blow-up can be seen as a trivial instance of the above idea, where the isolated singularity is already smooth.\par

Rationally blowing-up/down can also be done in a symplectic manner under suitable conditions, as shown first by Symington \cite{Sym}. The reader is also referred to \cite{evans_2023} for a more modern exposition. For us, the relevant cases are when $C$ consists of a single $(+2)$ or $(+4)$ symplectic sphere, and blowing down $C$ will replace it with a Lagrangian $S^2$ or $\mathbb{R}P^2$ respectively. Conversely, blowing up such a Lagrangian $L$ will replace it with the corresponding symplectic sphere $C$. The power of this technique comes from the following simple observation with many surprising applications.

\begin{proposition}\emph{(\cite{LW},\cite{BLW})}\label{rblowup}
Let $L$ be either a Lagrangian $S^2$ or $\mathbb{R}P^2$ in a symplectic $4$-manifold $(M,\omega)$. Rationally blowing up $L$ produces a new symplectic manifold $(\widetilde{M},\widetilde{\omega})$ where $L$ has been replaced by $S$, a symplectic $(-2)$ or $(-4)$-sphere respectively. In addition, if $M$ is rational, then so is $\widetilde{M}$.
\end{proposition}

\begin{remark}
Since blowing-up along $L$ alters $M$ only in a small neighborhood of $L$, it is clear that $(M-\nu L,\omega)$ is symplectomorphic to $(\widetilde{M}-\nu S,\widetilde{\omega})$, where $\nu L$ and $\nu S$ are small enough standard Weinstein neighborhoods of the corresponding submanifolds.
\end{remark}

One instance where the above remark becomes useful is that symplectic behavior \textit{relative} to $L$ is the same as relative to $S$. For example, in section 6.4.1 in \cite{LW}, the following is shown:

\begin{lemma}\label{sphereuniq}
Any two symplectic $(+4)$-spheres in $(\mathbb{C}P^2,\omega_{FS})$ disjoint from the standard $\mathbb{R}P^2$ are Hamiltonian isotopic via an isotopy that fixes the Lagrangian $\mathbb{R}P^2$.
\end{lemma}

Another and arguably more important, application of Proposition \ref{rblowup} is the proof of Hamiltonian uniqueness of spheres or projective spaces in certain symplectic rational manifolds.

\begin{Theorem}\emph{(\cite{H}, Section 6 in \cite{LW}, Theorem 1.3 in \cite{BLW})}\label{laguniqcp2}
Any two Lagrangian $\mathbb{R}P^2$s in $(\mathbb{C}P^2,\omega_{FS})$ are Hamiltonian isotopic.
\end{Theorem}

\section{Compactification of the cotangent disc bundle}\label{sectioncompactification}

Now we move on with describing the compactification, following ideas outlined in \cite{AuLag}. First, we will prove a general statement about the compactification of the cotangent disc bundle of $\mathbb{R}P^n$ to $\mathbb{C}P^n$. However, this compactification endows $\mathbb{C}P^n$ with a symplectic form different from the Fubini-Study one. We will then continue to show that for $n=2$, the aforementioned form on $\mathbb{C}P^2$ is indeed symplectomorphic to the standard Fubini-Study form, in a controlled way.

Let us first establish some notation and conventions. We will identify the Cotangent bundle of the Sphere $(T^*S^n,\omega_{std})$ with  the subset of $(\mathbb{C}^{n+1},\omega_{std})$ defined as

\[\{(p,q)\big||p|=1,\langle p,q\rangle=0\}\subset \mathbb{R}^{n+1}\oplus\mathbb{R}^{n+1}\approx \mathbb{C}^{n+1}\]
and we will denote with $U^*_rM$ (resp. $S^*_rM)$ the radius $r$ cotangent disc (resp. sphere) bundle of $M=S^2$ or $M=\mathbb{R}P^2$. In particular, $U^*_1S^2$ can be seen as the tuples of $(p,q)\in T^*S^n$ where $|q|=1$. The boundary of $U_1^*S^1$ will be denoted by $S_1^*S^n$.\par
When we refer to the quadric $Q^n$ in $\mathbb{C}P^{n+1}$ we will always mean the standard Fermat quadric
\[Q^n=[z_0^2+z_1^2+\cdots +z_{n+1}^2=0].\]

Finally, $H_i\subset \mathbb{C}P^n$ will denote the hyperplane with vanishing $i$-th coordinate. By slightly abusing notation we will also denote with $Q^{n-1}$ the intersection $Q^{n}\cap H_{n+1}$, whenever no confusion can arise.

The idea for the general compactification is simple; first, recall that the ball of $\mathbb{C}^{n}$ may be symplectically identified with the complement of a hyperplane $H_{n}$ in $\mathbb{C}P^n$. The (symplectic) inclusion $T^*S^n\hookrightarrow \mathbb{C}^{n+1}$ places $U_1^*S^n$ inside a ball of $\mathbb{C}^{n+1}$ such that the boundary of $U_1^*S^n$ is a subset of the boundary of the ball. Symplectically embedding the ball inside $\mathbb{C}P^n$ will identify $U_1^*S^n$ with the quadric $Q^n$ and the boundary of $U_1^*S^n$ will be mapped to $Q^n\cap H_{n+1}$, realizing the aforementioned identification in a symplectic manner.\par

Let us provide the details: As a warm-up, we first explain how to symplectically embed a ball in $\mathbb{C}P^{n+1}$. The idea is to view the ball in question as the $2(n+1)$-cell that is attached to $\mathbb{C}P^n$ to create $\mathbb{C}P^{n+1}$.
\begin{lemma}\label{projemb}
The open ball of radius $r$, $B^{2n+2}(r)\subset (\mathbb{C}^{n+1},\omega_{std})$, is symplectomorphic to $\mathbb{C}P^{n+1}-H_{n+1}$ with the form $r^2\omega_{FS}$.
\end{lemma}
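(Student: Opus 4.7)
My plan is to write down an explicit $U(n+1)$-equivariant symplectomorphism. In the standard affine chart $\mathbb{C}P^{n+1}\setminus H_{n+1}\cong\mathbb{C}^{n+1}$ the form $\omega_{FS}$ has K\"ahler potential $\log(1+|w|^2)$, so I am looking for $\phi\colon(B(r),\omega_{std})\to(\mathbb{C}^{n+1},r^2\omega_{FS})$. Invariance of both forms under $U(n+1)$ strongly suggests the radial ansatz $\phi(z)=g(|z|^2)\,z$ for some smooth positive $g$ on $[0,r^2)$.

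To pin down $g$, I would match the moment maps for the diagonal $S^1$-action of scalar multiplication. A short computation gives $\mu_{std}(z)=\tfrac{1}{2}|z|^2$ on $B(r)$ and $\mu_{FS}(w)=\tfrac{r^2|w|^2}{2(1+|w|^2)}$ on $\mathbb{C}^{n+1}$, both with image $[0,r^2/2)$. Imposing $\mu_{FS}\circ\phi=\mu_{std}$ algebraically forces $|\phi(z)|^2=|z|^2/(r^2-|z|^2)$, leading to the explicit formula
\[ \phi(z)=\frac{z}{\sqrt{r^2-|z|^2}}. \]
The radial profile $\rho\mapsto\rho/\sqrt{r^2-\rho^2}$ is a monotone bijection $[0,r)\to[0,\infty)$, so $\phi$ is manifestly a $U(n+1)$-equivariant diffeomorphism from $B(r)$ onto $\mathbb{C}^{n+1}$.

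The main step is then verifying $\phi^*(r^2\omega_{FS})=\omega_{std}$. Because $\phi$ is not holomorphic, I cannot simply equate K\"ahler potentials and must compute the pullback directly. I would split the tangent space at a nonzero point $z$ into the $S^1$-orbit direction $\mathbb{R}\langle iz\rangle$, the radial direction $\mathbb{R}\langle z\rangle$, and the symplectic complement $z^\perp$, which projects isomorphically onto the tangent space of the reduced space $\mathbb{C}P^n$. Equality of the two forms on the span of $iz$ and $z$ is essentially the moment map identity above, while equality on $z^\perp$ follows from $U(n+1)$-equivariance together with the fact that the reduced symplectic structures on $\mathbb{C}P^n$ at a fixed moment level are $U(n+1)$-invariant forms of the same total area (forced by the moment map matching), and hence agree. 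The main technical obstacle I anticipate is making the three-block decomposition precise and checking that no mixed cross terms appear between radial and $S^1$ directions; once this is handled, the rest of the calculation is routine.
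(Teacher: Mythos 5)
Your proof is correct, but it takes a genuinely different route from the paper. The paper writes down the lift $z\mapsto(z_0,\dots,z_n,i\sqrt{r^2-\sum|z_i|^2})$ into the sphere $S^{2n+3}(r)\subset\mathbb{C}^{n+2}$ and verifies $\phi^*\omega_{FS}=\tfrac{1}{r^2}\omega_{std}$ in one line using $r^2\pi_r^*\omega_{FS}=i_r^*\omega_{std}$ together with the observation that the extra coordinate is purely imaginary, so its $dx_{n+1}\wedge dy_{n+1}$ contribution vanishes. Your map is in fact the same one: composing the paper's $\phi$ with the affine chart $[z:z_{n+1}]\mapsto z/z_{n+1}$ gives $z\mapsto -iz/\sqrt{r^2-|z|^2}$, which differs from yours by the unitary factor $-i$. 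What your approach buys is a conceptual explanation (equivariance plus moment-map matching pins down the radial profile), at the cost of a harder verification: the paper's check is a two-line coordinate computation, whereas your three-block argument still has real content in the $z^\perp$ block. There, ``same total area, forced by the moment map matching'' is the one step you should not wave at: an equivariant diffeomorphism intertwining moment maps does \emph{not} in general match reduced symplectic forms. Here it does because the circle bundles $\mu^{-1}(c)\to\mathbb{C}P^n$ are the same Hopf bundle for both structures and both reduced classes tend to $0$ as $c\to 0^+$, so Duistermaat--Heckman linearity forces $[\omega_{red}(c)]$ to agree; alternatively a direct computation shows $r^2\omega_{FS}|_{w^\perp}=\tfrac{r^2}{1+|w|^2}\,\omega_{std}|_{w^\perp}$ and $d\phi$ scales $z^\perp$ by $(r^2-|z|^2)^{-1/2}$, so the factors cancel. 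Also, the cross term you should worry about is not radial versus $S^1$ (that pairing is exactly the nonzero moment-map term $\omega(z,iz)=|z|^2$) but radial versus $z^\perp$; it vanishes for both forms because the stabilizer $U(n)$ of $z$ acts on $z^\perp$ with no nonzero invariant linear functional. With those two points made explicit your argument is complete, though noticeably longer than the paper's.
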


\begin{proof}
Consider the map $\phi:B^{2n+2}(r)\rightarrow \mathbb{C}P^{n+1}-H_{n+1}$ given by

\[\phi(z_0,\dots z_n)=[z_0:\dots:z_n:i\sqrt{r^2-\sum |z_i|^2}].\]
The map $\phi$ can be factorized through maps $\widetilde\phi:B^{2n+2}(r)\hookrightarrow S^{2n+3}(r)\subset \mathbb{C}^{n+2}$
and $\pi_r:S^{2n+3}(r)\rightarrow \mathbb{C}P^{n+1}$, where $\widetilde\phi$ is just the map
 
\[\widetilde\phi(z_0,\dots z_{n})=(z_0,\dots,z_{n},i\sqrt{r^2-\sum |z_i|^2})\]
and $\pi_r$ the standard quotient by the scalar action. We also have the natural inclusion, $i_r:S^{2n+3}(r)\hookrightarrow \mathbb{C}^{n+2}$, which satisfies 

\[r^2\pi_r^*\omega_{FS}=i^*_r\omega_{std}.\]
 Here we abuse notation slightly, to denote with $\omega_{std}$ the standard symplectic forms both on $\mathbb{C}^{n+1}$ and $\mathbb{C}^{n+2}$ but it should be clear which one we mean. With that in mind, we compute that

\[\phi^*\omega_{FS}=\widetilde{\phi}^*\pi_r^*\omega_{FS}=\frac{1}{r^2}\widetilde{\phi}^*i_r^*\omega_{std}=\frac{1}{r^2}\omega_{std}.\]
The last equality holds since the coordinate $z_{n+1}$ in the image of $\widetilde{\phi}$ is purely imaginary.
\end{proof}

Now, notice that $U^*_1S^n$ is a subset of $B^{2n+2}(\sqrt{2})$. As we showed that $B^{2n+2}(\sqrt{2})$ symplectically compactifies to $\mathbb{C}P^{n+1}$, one can hope that $U^*_1S^n$  as a subset of the ball will also be symplectically compactified. Indeed, that is the case:

\begin{lemma}\label{sphereembedding}
$(U^*_1S^n,\omega_{std})$ is symplectomorphic to $Q^n-Q^{n-1}$ with the restriction of the form $2\omega_{FS}$.
\end{lemma}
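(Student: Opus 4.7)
The plan is to produce the symplectomorphism as a composition of an elementary symplectic embedding of $U_1^*S^n$ into a ball of radius $\sqrt{2}$ with the map $\phi$ from Lemma \ref{projemb}.

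First, I would define an embedding $\iota : U_1^*S^n \hookrightarrow B^{2n+2}(\sqrt{2}) \subset \mathbb{C}^{n+1}$ by the rule
\[ \iota(p,q) = p + iq. \]
Since $|p|^2 + |q|^2 = 1 + |q|^2 < 2$ on the open unit disc bundle, the image really does lie inside the open ball of radius $\sqrt{2}$. Because $\iota$ is the restriction to $U_1^*S^n$ of the $\mathbb{R}$-linear isomorphism $\mathbb{R}^{n+1}\oplus\mathbb{R}^{n+1}\to\mathbb{C}^{n+1}$ sending standard real Darboux coordinates to the standard complex coordinates, it intertwines the $\omega_{std}$ of the source with the $\omega_{std}$ of $\mathbb{C}^{n+1}$.

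Next, I would compose with the map $\phi : B(\sqrt{2}) \to \mathbb{C}P^{n+1} - H_{n+1}$ of Lemma \ref{projemb} (taking $r=\sqrt{2}$) to form $\Phi := \phi\circ\iota$. By Lemma \ref{projemb}, $\phi^*(2\omega_{FS}) = \omega_{std}$, hence $\Phi^*(2\omega_{FS})$ is the standard symplectic form on $U_1^*S^n$. So $\Phi$ is automatically a symplectic embedding into $(\mathbb{C}P^{n+1}, 2\omega_{FS})$.

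The remaining task is to identify the image of $\Phi$ with $Q^n - Q^{n-1}$. For a generic $w = x+iy \in B(\sqrt{2})$, the homogeneous coordinates of $\phi(w)$ are $(w_0,\ldots,w_n, i\sqrt{2-|w|^2})$, so
\[ \sum_{k=0}^{n+1} z_k^2 \;=\; \bigl(|x|^2 - |y|^2 + 2i\langle x,y\rangle\bigr) + \bigl(|w|^2 - 2\bigr) \;=\; 2\bigl(|x|^2 - 1\bigr) + 2i\langle x,y\rangle. \]
This vanishes if and only if $|x| = 1$ and $\langle x, y\rangle = 0$, which are exactly the defining equations of $T^*S^n \subset \mathbb{R}^{n+1}\oplus\mathbb{R}^{n+1}$. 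Thus $\phi^{-1}(Q^n)\cap B(\sqrt{2}) = \iota(U_1^*S^n)$, so $\Phi$ is a bijection onto $Q^n \cap (\mathbb{C}P^{n+1} - H_{n+1}) = Q^n - Q^{n-1}$ (using the observation from the preamble of the section that $H_{n+1}\cap Q^n \cong Q^{n-1}$).

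The main obstacle is really just finding the right complex-linear packaging of $(p,q)$. Using $z = q + ip$ instead of $z = p + iq$, for instance, produces $\sum z_k^2 = 2(|q|^2-1)$ in total and therefore does \emph{not} land on $Q^n$ except on the boundary; the key observation is that with $z = p + iq$ the two constraints $|p|^2 = 1$ and $\langle p, q\rangle = 0$ fall out simultaneously from the real and imaginary parts of $\sum z_k^2 = 0$, which is exactly what makes the identification with the affine quadric work.
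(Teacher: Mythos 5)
Your proposal is correct and follows essentially the same route as the paper: composing the identification $(p,q)\mapsto p+iq$ with the ball embedding $\phi$ of Lemma \ref{projemb} at radius $\sqrt{2}$, and then computing $\sum z_k^2 = 2(|x|^2-1)+2i\langle x,y\rangle$ to identify the image with $Q^n-Q^{n-1}$. Your write-up merely makes explicit two points the paper leaves implicit, namely the symplecticity of the inclusion $\iota$ and the surjectivity onto $Q^n\cap(\mathbb{C}P^{n+1}-H_{n+1})$.
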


\begin{proof}
We will show that the map $\phi$ defined in the previous lemma provides the claimed symplectomorphism.
For $[z_0:\dots:z_{n}:z_{n+1}]\in \mathbb{C}P^{n+1}-H_{n+1}$ we have
\begin{align*}
   z_0^2+\dots+z_n^2+z_{n+1}^2&=z_0^2+\dots+z_n^2-2+\sum_0^n|z_k|^2\\
   &=\sum_0^n (x_k^2-y_k^2)+2i\sum_0^n  x_ky_k-2+\sum_0^n(x_k^2+y_k^2)\\
                                   &=2(\sum_0^nx_k^2-1)+2i\sum_0^nx_ky_k.
\end{align*}
Therefore $\phi$ maps $U_1^*S^n$, seen as a subset of $B^{2n}(\sqrt{2})$, bijectively to $Q^{n}-Q^{n-1}$.

\end{proof}

At this point, we want to show that the above compactification agrees with the more intrinsic one obtained by performing a symplectic cut on $T^*S^n$ with respect to the $S^1$-action induced by the co-geodesic flow. By definition, this is the flow induced by the Hamiltonian $H: T^*S^n\rightarrow \mathbb{R}$ defined by $H(p,q)=\frac{1}{2}|q|^2$.

\begin{proposition}\label{unitcut}
The closure of $\phi(U_1^*S^n)$ in $(Q^n,2\omega_{FS})$ is symplectomorphic to the symplectic cut along the level set $H^{-1}(\frac{1}{2})$ of the Hamiltonian $H$ inducing the co-geodesic flow on $T^*S^n$. In particular, the symplectic cut of $U_1^*S^n$ with respect to the co-geodesic flow is symplectomorphic to $Q^n$ with the restriction of $\omega_{FS}$.
\end{proposition}
\begin{proof}
The closure of $\phi(U_1^*S^n)$ is just the $\phi$-image of $U_1^*S^n$ and its boundary $S^*S^n$. Consider the symplectic cut defined by the Hamiltonian $F(z)=\frac{1}{2}|z|^2$, on $F^{-1}((-\infty,1])$. The map $\phi$, restricted to  $\overline{B}^{2n+2}(\sqrt{2})$,  is exactly the quotient map of that symplectic cut.\par 
Therefore the closure of $\phi(U_1^*S^n)$ is symplectomorphic to the symplectic cut defined by the restriction of $F$ to the closure of $U_1^*S^n$, seen as a subset of $\overline{B}^{2n+2}(\sqrt{2})$. Notice that the two Hamiltonians differ by a constant, i.e.
\[F|_{T^*S^n}=H+\frac{1}{2}\]
and therefore the symplectic cuts of $T^*S^n$ on $F^{-1}(1)$ and $H^{-1}(\frac{1}{2})$ are symplectomorphic since the corresponding Hamiltonians induce the same flow.
\end{proof}

In the next step, we will use the above compactification of $U_1^*S^n$ and the fact that $U_1^*S^n$ double covers $U_1^*\mathbb{R}P^n$ to derive a compactification for $T^*\mathbb{R}P^n$. 

\begin{corollary}\label{branchedcovering}
By performing a symplectic cut along the boundary of $U_1^*\mathbb{R}P^n$ we get a manifold diffeomorphic to $\mathbb{C}P^n$, 
equipped with a symplectic form $\overline{\omega}_n$. In particular, the symplectic cut extends the symplectic double cover $U_1^*S^n\xrightarrow{\pi'} U_1^*\mathbb{R}P^n$ to a symplectic branched double cover $Q^n\xrightarrow{\pi}\mathbb{C}P^n$, branched along the quadric $Q^{n-1}$ and satisfying $\pi^*\overline{\omega}_{n}=\omega_{FS}|_{Q^n}$.
\end{corollary}

\begin{proof}
First, let us examine the branched covering. We can define a projection map $\pi: \mathbb{C}P^{n+1}-[0:\dots:0:1]\rightarrow \mathbb{C}P^n$ simply by \[[z_0:\dots:z_n:z_{n+1}]\xrightarrow{\pi} [z_0:\dots:z_n].\]
When restricted to $Q^n$ this map gives a double cover $Q^n\rightarrow \mathbb{C}P^n$, branched along $Q^{n-1}\subset\mathbb{C}P^n$. These maps fit in the following commutative diagram:
\[
\begin{tikzcd}
U_1^*S^n \arrow[r, "i", hook] \arrow[d, "\pi'"'] & Q^n \arrow[d, "\pi"] \\
U_1^*\mathbb{R}P^n \arrow[r, "i'", hook]      & \mathbb{C}P^n       
\end{tikzcd}
\]
The deck transformation of $\pi$ is given by 

\[\delta:[z_0:\dots:z_{n+1}]\rightarrow [z_0:\dots:-z_{n+1}].\]
The inclusion $U^*_1 S^n\xhookrightarrow{i} Q^n$ commutes with $\delta$ and with the antipodal action, i.e. $\delta\circ i (p,q)=i(-p,-q)$, therefore we can push down the symplectic form of $Q^n-Q^{n-1}$ to $\mathbb{C}P^{n}-Q^{n-1}$. Analogously to the inclusion $U_1^*S^n\xhookrightarrow{i}Q^n$, the inclusion $U_1^*\mathbb{R}P^n\xhookrightarrow{i'} \mathbb{C}P^n$ extends to a diffeomorphism from the symplectically cut cotangent disc bundle to $\mathbb{C}P^n$. 
\end{proof}

\begin{remark}
While the above construction is very natural from an algebro-geometric perspective, unfortunately, the symplectic form $\overline{\omega}_n$ we obtain after the symplectic cut on $U_1^*\mathbb{R}P^n$ is \textbf{not} the standard Fubini-Study form, even up to scaling. Since $\pi$ is a holomorphic map (concerning the standard complex structures), if it also preserved the standard symplectic forms it would have to be an isometry and this would lead to a contradiction since $Q^n$ is orthogonal to the pre-images of $\pi$ only along $Q^{n-1}$. 

\end{remark}

We turn now our attention to dimension $4$. Here, we can use the deep and classical results coming from pseudoholomorphic curve techniques to correct the form $\overline{\omega}_2$ and provide the desired compactification of $U_1^*\mathbb{R}P^2$.

\begin{proposition}\label{compactprop}
The form $\overline{\omega}_2$ on $\mathbb{C}P^2$ is diffeomorphic to the form $2\omega_{FS}$. In addition, the diffeomorphism can be chosen to fix both the standard $\mathbb{R}P^2$ and the quadric $Q^1$.
\end{proposition}

\begin{proof}
The symplectic manifold $(\mathbb{C}P^2,\overline{\omega}_2)$ contains a symplectic sphere with positive self-intersection, namely the quadric $Q^1$. Thus, by a fundamental theorem of McDuff (Theorem 9.4.1 in \cite{JMS}), there exists a diffeomorphism $f:\mathbb{C}P^2\rightarrow \mathbb{C}P^2$ such that $f^*(2\omega_{FS})=\overline{\omega}_2$, since the symplectic sphere $Q^1$ has the same area with respect to both forms. We will modify $f$ in two stages: Firstly, we will correct the image of the standard real projective plane $L$ and secondly, we will correct the image of the quadric $Q^1$.\par

For the first step, one can use Theorem \ref{laguniqcp2}. By composing  $f$ with a symplectomorphism mapping $f(L)$ to $L$, we may, and will, assume that $f(L)=L$.\par

The second step follows by composing $f$ with the Hamiltonian symplectomorphism given in Lemma \ref{sphereuniq}; as a result, $f$ fixes not only $L$ but also $Q^1$.
\end{proof}

\begin{remark}
For $n>2$, we were unable to determine whether $\overline{\omega}_n$ is diffeomorphic to $2\omega_{FS}$ or not.
\end{remark}

Let us conclude by mentioning that the above construction also works when one wants to compactify the radius $r$ cotangent disc bundles of either $U_r^*S^n$ or $U_r^*\mathbb{R}P^n$. The only slight modification that one has to do is to first pass from $U_r^*(S^n(1))$ to $U_{\sqrt{r}}^*(S^n(\sqrt{r}))$ and then proceed exactly as above. The reason for this is because we need an \say{evened} disc bundle, i.e. the radius of the fibers to match the radius of the $n$-sphere, to identify the action induced by the co-geodesic flow on the boundary of $U_{\sqrt{r}}^*S^n(\sqrt{r})$ with the scalar action on $S^{2n+1}(\sqrt{2r})$. Having that in mind, one obtains:

\begin{proposition}$\label{almostcompactif}$
The symplectic cut along $(U^*_r(S^n(1)),\omega_{std})$ is symplectomorphic to $(Q^n,2r\omega_{FS})$. In addition, the symplectic cut along $(U^*_r\mathbb{R}P^n,\omega_{std})$ gives rise to the symplectic branched cover $(Q^n,2r\omega_{FS})\rightarrow (\mathbb{C}P^n,\overline{\omega}_{n,r})$. In the case $n=2$, the form $\overline{\omega}_{2,r}$ is symplectomorphic to $2r\omega_{FS}$.
\end{proposition}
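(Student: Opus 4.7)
The plan is to reduce the proposition to the three results already proved for the unit case: Proposition~\ref{unitcut}, Corollary~\ref{branchedcovering}, and the $n=2$ version of McDuff's theorem used earlier. The only genuinely new ingredient is a rescaling that lets us import the formulas for $\phi$ and the Hamiltonian flow verbatim.

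\textbf{Step 1: Rescaling.} I would first observe that the map
\[
\Phi:\mathbb{R}^{n+1}\oplus \mathbb{R}^{n+1}\longrightarrow \mathbb{R}^{n+1}\oplus \mathbb{R}^{n+1},\qquad (p,q)\longmapsto (\sqrt{r}\,p,\,q/\sqrt{r})
\]
is a symplectomorphism of $(\mathbb{R}^{2n+2},\omega_{std})$, because $d(\sqrt{r}p_i)\wedge d(q_i/\sqrt{r})=dp_i\wedge dq_i$. Under $\Phi$, the conditions defining $U^*_r(S^n(1))$ (i.e.\ $|p|=1$, $\langle p,q\rangle=0$, $|q|\le r$) become $|p'|=\sqrt{r}$, $\langle p',q'\rangle=0$, $|q'|\le\sqrt{r}$, so $\Phi$ symplectically identifies $U^*_r(S^n(1))$ with $U^*_{\sqrt{r}}(S^n(\sqrt{r}))$. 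This is the \emph{evening} of the disc bundle mentioned in the preamble: on the cosphere boundary we now have $|p'|^2+|q'|^2=2r$, so $U^*_{\sqrt{r}}(S^n(\sqrt{r}))$ sits inside the closed ball $\overline{B^{2n+2}(\sqrt{2r})}\subset \mathbb{C}^{n+1}$.

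\textbf{Step 2: Embedding into $\mathbb{C}P^{n+1}$.} Apply Lemma~\ref{projemb} with radius $\sqrt{2r}$: the ball $(B^{2n+2}(\sqrt{2r}),\omega_{std})$ is symplectomorphic to $(\mathbb{C}P^{n+1}-H_{n+1},\,2r\,\omega_{FS})$ via
\[
\phi(z_0,\dots,z_n)=[z_0:\dots:z_n:i\sqrt{2r-\textstyle\sum|z_i|^2}].
\]
Repeating the computation of Lemma~\ref{sphereembedding} with $2$ replaced by $2r$, the defining equation $\sum z_k^2+z_{n+1}^2=0$ on $\phi(B)$ becomes $2(|x|^2-r)+2i\langle x,y\rangle=0$, i.e.\ $|x|=\sqrt{r}$ and $\langle x,y\rangle=0$, which is exactly the defining condition of $T^*S^n(\sqrt{r})$. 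Thus $\phi$ restricts to a symplectomorphism between the interior of $U^*_{\sqrt{r}}(S^n(\sqrt{r}))$ and $Q^n-Q^{n-1}$, and the cosphere bundle $|y|=\sqrt{r}$ is sent into $H_{n+1}\cap Q^n=Q^{n-1}$.

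\textbf{Step 3: Performing the cut.} The co-geodesic flow on the cosphere bundle $\{|p|=|q|=\sqrt{r}\}$ is still of the form $\sigma_t(p,q)=(\cos t\,p+\sin t\,q,\,\cos t\,q-\sin t\,p)$; the only change from the unit case is that this trajectory now lies on $S^{2n+3}(\sqrt{2r})$ because $|p|^2+|q|^2=2r$. Under $\phi$ this is still the restriction of $e^{-it}\cdot$ on $\mathbb{C}^{n+2}$, so on the boundary the Hamiltonian $S^1$-action matches the scalar action used in Lerman's construction. Hence symplectic reduction on $\{|q|=\sqrt{r}\}$ recovers $Q^{n-1}$ with $2r\,\omega_{FS}|_{Q^{n-1}}$, and the symplectic cut of $U^*_r(S^n(1))$ is globally symplectomorphic to $(Q^n,\,2r\,\omega_{FS})$.

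\textbf{Step 4: Descent to $\mathbb{R}P^n$ and the $n=2$ case.} The diffeomorphism in Corollary~\ref{branchedcovering} commutes with the antipodal involution on $T^*S^n$ and the deck transformation $\delta$ on $Q^n$, and the rescaling $\Phi$ is equivariant with respect to the antipodal action as well; so pushing the symplectic form $2r\,\omega_{FS}|_{Q^n}$ down along the branched double cover $\pi:Q^n\to\mathbb{C}P^n$ produces a form $\omega_r$ on $\mathbb{C}P^n$ for which the compactification of $U^*_r\mathbb{R}P^n$ is $(\mathbb{C}P^n,\omega_r)$ and the cover is symplectic. For $n=2$, cohomologically $[\omega_r]=2r\,[\omega_{FS}]$ (the pullback relation along the $2{:}1$ cover forces it), and $(\mathbb{C}P^2,\omega_r)$ contains the symplectic $(+2r)$-sphere coming from $Q^1$, which has positive self-intersection. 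Hence McDuff's uniqueness theorem (Theorem $9.4.1$ in \cite{JMS}) applies exactly as in the $r=1$ proposition and yields a diffeomorphism $\omega_r\cong 2r\,\omega_{FS}$.

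The only real point of care is verifying in Step 3 that after evening the disc bundle the Hamiltonian $S^1$-action is genuinely conjugated to the scalar action on $S^{2n+3}(\sqrt{2r})$; everything else is a bookkeeping exercise in which each power of $r$ tracks through the formulas of Lemmas~\ref{projemb} and \ref{sphereembedding} unchanged.
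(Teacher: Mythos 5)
Your proposal is correct and follows exactly the route the paper intends: the paper only sketches this proof in the paragraph preceding the proposition (pass from $U_r^*(S^n(1))$ to the ``evened'' bundle $U_{\sqrt{r}}^*(S^n(\sqrt{r}))$ and rerun Lemmas~\ref{projemb}, \ref{sphereembedding}, Proposition~\ref{unitcut} and Corollary~\ref{branchedcovering} with $2$ replaced by $2r$), and your Steps 1--4 are precisely that sketch carried out in detail, including the correct explicit rescaling symplectomorphism $(p,q)\mapsto(\sqrt{r}\,p,q/\sqrt{r})$ and the verification that the boundary $S^1$-action still matches the scalar action on $S^{2n+3}(\sqrt{2r})$.
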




\section{Arnold's Conjecture for $\mathbb{R}P^2$}

The final ingredient we need is a statement about the homotopy type of the group of compactly supported symplectomorphisms of $T^*\mathbb{R}P^2$, which we denote by $\text{Symp}_c(T^*\mathbb{R}P^2)$. The relevant result is proved by Evans in \cite{Emcg}, using equivariant versions of arguments that Seidel used in \cite{S} to study $\text{Symp}_c(T^*S^2)$.  \par 

To state the above results, one has to introduce the notion of a \textit{generalized Dehn-Twist} along a Lagrangian sphere. Roughly speaking, one can define a map $\tau: T^*S^n\rightarrow T^*S^n$ that is equivariant with respect to the antipodal action, restricts to the antipodal map on the zero section, and is the identity away from the zero section. This definition generalizes the usual Dehn Twist that one encounters in $2$-dimensional topology. Of course, since $\tau$ is antipodal-equivariant, it descends to a diffeomorphism $\bar{\tau}$ of $T^*\mathbb{R}P^n$. \par 
Even though it is somewhat technical to give an explicit formula for $\tau$ and $\bar{\tau}$, they can be constructed to give symplectomorphisms of $T^*S^n$ and of $T^*\mathbb{R}P^n$ respectively. In particular, since $\tau,\bar{\tau}$ can be made to have support in an arbitrarily small neighborhood of the zero section, by Weinstein's Lagrangian neighborhood Theorem, one can perform a Dehn-twist along any Lagrangian $S^n$ or $\mathbb{R}P^n$. Lagrangian Dehn Twists along $S^n$ were first explored by Seidel in \cite{S} to give examples of symplectomorphisms, namely squares of Dehn-Twists, that are smoothly but not symplectically isotopic to the identity. \par   
The relevant result we need is:

\begin{proposition}\emph{(Section 3 in \cite{Emcg})}\label{dehntwist}
The Dehn-Twist $\bar{\tau}$ along the zero section of $T^*\mathbb{R}P^2$ generates $\pi_0(\text{Symp}_c(T^*\mathbb{R}P^2))$. In other words, any compactly supported symplectomorphism $\phi$ can be written as
\[\phi=\bar{\tau}^n\circ\eta\] 
for some $n\geq 0$ and $\eta$ a Hamiltonian symplectomorphism.
\end{proposition}

Combining the above Proposition with the results of the previous sections, we can finally prove the main theorem of this paper:

\begin{Theorem}\emph{(Arnold's conjecture for $\mathbb{R}P^2$)}\label{maintheorem}
Let $L$ be an embedded Lagrangian submanifold of $T^*\mathbb{R}P^2$ which is diffeomorphic to $\mathbb{R}P^2$. There exists a compactly supported Hamiltonian isotopy $\{H_t\}_{t\in [0,1]}$ such that $H_1(L)=0_s$ where $0_s$ is the zero-section.
\end{Theorem}
\begin{proof}
Since $L$ is compact we can assume that, without loss of generality, it is contained in $U_1^*\mathbb{R}P^2$. Following Proposition \ref{compactprop}, we compactify this disc bundle to $(\mathbb{C}P^2,2\omega_{FS})$, where the boundary of the bundle is mapped to a degree $2$ symplectic sphere $S$. Thus, we consider the Lagrangians $L$ and $0_s$ as living in the complement of $S$. Now we can apply Lemma \ref{sphereuniq} and Theorem \ref{laguniqcp2}, as in the proof of Proposition \ref{compactprop}, to find a symplectomorphism $\psi$ of $\mathbb{C}P^2$ such that $\psi(L)=0_s$ and $\psi=id$ in a neighborhood of $S$. Therefore $\psi$ gives a compactly supported symplectomorphism of $U_1^*\mathbb{R}P^2$ mapping $L$ to $0_s$.\par

Using Proposition \ref{dehntwist} we can factor $\psi$ as $\psi=\bar\tau^n\circ \eta$. Since $\psi(L)=0_s$ and $\bar\tau^n(0_s)=0_s$ we have that $\eta(L)=0_s$ thus $\eta$ is the desired Hamiltonian symplectomorphism.

\end{proof}


\printbibliography

\end{document}